\newtheorem{theorem}{Theorem}
\title{ Evolution of Functionals Under Extended Ricci Flow }
\author{
  \begin{tabular}[t]{c}
    Shouvik Datta Choudhury \\
    \small shouvikdc8645@gmail.com,\\ 
    \small shouvik@capsulelabs.in \\
    \small Gapcrud Private Limited (Capsule Labs) \\
    \small HA 130, Saltlake, Sector III, Bidhannagar, \\
    \small Kolkata - 700097, India
  \end{tabular}
 }
\date{October 2024}
\begin{document}
\maketitle
\begin{abstract}
    
In this paper, we investigate the evolution of certain functionals involving higher powers of a scalar quantity \( F \) under Bernard List's extended Ricci flow on a compact Riemannian manifold. By deriving explicit expressions for the time derivative of integrals of the form \( \int_M F^n \cdot \frac{\partial F}{\partial t} \, d\mu \) for various powers \( n \), we explore the intricate interplay between geometric quantities and scalar functions without making any assumptions about the manifold, the scalar field \( \Phi \), or the function \( u \).
\end{abstract}
\section{MSC Classification}
53C44 — Geometric evolution equations (mean curvature flow, Ricci flow, etc.)\\
53C21 — Methods of Riemannian geometry, including PDE methods; curvature restrictions\\
53C25 — Special Riemannian manifolds (Einstein, Sasakian, etc.)\\
35K55 — Nonlinear parabolic equations\\
35K45 — Initial value problems for second-order parabolic systems\\
58J35 — Heat and other parabolic equation methods\\
53C07 — Special connections and metrics on vector bundles (Hermitian, metric connections, etc.)\\
53C80 — Applications to physics
\section{Introduction}
In this paper, we aim to achieve estimates of a functional $F^n=[u + aulogu + bSu]^n$, where $n \in Z^{+}$ under the action of extended Ricci flow, propounded by Bernhard List, who modified the classical Ricci flow by Richard Hamilton, to incorporate delicate general relativity connotations. In the above expression of the functional $F$, which is raised to the power $n$, represents a set of functionals $F_3$, $F_5$ and $F_7$ as examples or representatives among all values of n. These functionals for different values of $n$ are analysed in this paper under the action of extended Ricci flow. $u$ is a scalar function but $S$ is tensorial in nature because $$S = Ric - \phi_i\otimes\phi_j$$ or in local co-ordinates $$S_{ij} = R_{ij} - \phi_i\phi_j$$, where Ric is the Ricci tensor associated with the metric $g_ij$ of the complete Riemmanian manifold $M^{n}$, where $n$ is the dimension of the manifold. The main obstruction in our context is the tensorial $Ric$ and its variation $\delta$ due to the variation of the functional $F^n$  for different values of $n$. The variation of the functional $F$ results in the variation of the Riemann Christoffel tensor, Ricci tensor, Riemann tensor, as well as the Riemannian measure and the Laplace-Beltrami operator. Here we use $v_{ij}$, which is a one-parameter group of diffeomorphism, associated with $g_ij$, which is involved in the variational equation [Besse] in section 4. $v_{ij}$ is the main connection between $S$ and the functional in context. We equate $S_ij$'s expression to $v_{ij}$ and later perform variations or derivative with respect to time to obtain the non -linearized version or the "raw" version of the variation and later linearize it through De-Turck's trick and later smooth out the irregularities using Ulhenbeck's trick. We have expressed functional $F^n$ in terms of integral components consisting of $S_{ij}$ and also components invoving $F^{n-1}$ and also lower order terms of $F$. Ulhenbeck's trick and Ricci flow with surgery developed by Perelman are two of the foundational works which aim to smooth out irregularities of the nature of the equation and that of the Ricci flow respectively, Ulnhenbeck's case has been also used in case of deformation of symplectic manifold in supersymmetric case.         
\section{Literature Review}
The study of Ricci flow and its extensions has significantly advanced the understanding of geometric structures and curvature evolution on manifolds. Li Yi's work on local curvature estimates for Ricci-harmonic flow and generalized Ricci flows provides crucial insights into singularity formation and long-time behavior of solutions (Li, 2018a; Li, 2009). Gradient estimates for heat equations under Ricci flow have been extensively developed by researchers like Shiping Liu (2009), Huang and Ma (2015), and Shu-Yu Hsu (2008), enhancing the analysis of heat equations on evolving manifolds and proving essential inequalities. Rugang Ye (2005) introduced curvature estimates based on scaling-invariant integrals of the Riemann curvature tensor, aiding singularity analysis. Colding and Minicozzi (2003) estimated extinction times for Ricci flow on certain 3-manifolds, addressing questions posed by Perelman. Abolarinwa (2016) studied eigenvalue evolution under extended Ricci flow, while Meng Zhu (2013) established Gaussian bounds for the heat kernel. Min Chen (2020) introduced gradient estimates relating solution values to distances, applicable to quasilinear parabolic equations. Gianniotis (2013) obtained higher-order curvature estimates near boundaries, crucial for manifolds with boundary. Bing Wang (2007) identified criteria for extending Ricci flow beyond singularities. Contributions by Băileșteanu et al. (2009, 2015) and Bamler (2014) further enriched the field by exploring the coupling of geometric flows with harmonic maps and generalizing Perelman's estimates. Xian-gao Liu and Yongjie Shi (2014) derived Sobolev inequalities and Gaussian bounds for the conjugate heat equation along extended Ricci flows, enhancing analytical tools in geometric analysis.
\section{Mathematical Preliminaries}
From [Besse], Let \( M \) be a smooth manifold and let \( g_t \) be a one-parameter family of Riemannian or pseudo-Riemannian metrics. Suppose that it is a differentiable family in the sense that for any smooth coordinate chart, the derivatives \( v_{ij} = \dfrac{\partial}{\partial t}\left( (g_t)_{ij} \right) \) exist and are themselves as differentiable as necessary for the following expressions to make sense. Here, \( v = \dfrac{\partial g}{\partial t} \) is a one-parameter family of symmetric 2-tensor fields.
\[
\left\{
\begin{aligned}
1.\quad & \frac{\partial}{\partial t} \Gamma_{ij}^k = \frac{1}{2} g^{kp} \left( \nabla_i v_{jp} + \nabla_j v_{ip} - \nabla_p v_{ij} \right) \\[2ex]
2.\quad & \frac{\partial}{\partial t} R_{ijkl} = \frac{1}{2} \left( \nabla_j \nabla_k v_{il} + \nabla_i \nabla_l v_{jk} - \nabla_i \nabla_k v_{jl} - \nabla_j \nabla_l v_{ik} \right) + \frac{1}{2} R_{ijk}{}^p v_{pl} - \frac{1}{2} R_{ijl}{}^p v_{pk} \\[2ex]
3.\quad & \frac{\partial}{\partial t} R_{ik} = \frac{1}{2} \left( \nabla^p \nabla_k v_{ip} + \nabla_i (\operatorname{div} v)_k - \nabla_i \nabla_k (\operatorname{tr}_g v) - \Delta v_{ik} \right) + \frac{1}{2} R_i{}^p v_{pk} - \frac{1}{2} R_i{}^p{}_k{}^q v_{pq} \\[2ex]
4.\quad & \frac{\partial}{\partial t} R = \operatorname{div}_g \operatorname{div}_g v - \Delta (\operatorname{tr}_g v) - \langle v, \operatorname{Ric} \rangle_g \\[2ex]
5.\quad & \frac{\partial}{\partial t} d\mu_g = \frac{1}{2} g^{pq} v_{pq} \, d\mu_g \\[2ex]
6.\quad & \frac{\partial}{\partial t} \nabla_i \nabla_j \Phi = \nabla_i \nabla_j \left( \frac{\partial \Phi}{\partial t} \right) - \frac{1}{2} g^{kp} \left( \nabla_i v_{jp} + \nabla_j v_{ip} - \nabla_p v_{ij} \right) \frac{\partial \Phi}{\partial x^k} \\[2ex]
7.\quad & \frac{\partial}{\partial t} \Delta \Phi = -\langle v, \operatorname{Hess} \Phi \rangle_g - g \left( \operatorname{div} v - \frac{1}{2} d (\operatorname{tr}_g v), d\Phi \right)
\end{aligned}
\right.
\]
where \( \Gamma_{ij}^k \) are the Christoffel symbols associated with the metric \( g \), \( R_{ijkl} \) is the Riemann curvature tensor, \( R_{ik} \) is the Ricci curvature tensor, \( R \) is the scalar curvature, \( d\mu_g \) is the Riemannian volume form associated with the metric \( g \), \( \nabla \) denotes the covariant derivative with respect to \( g \), \( \Delta \) is the Laplace-Beltrami operator, defined by \( \Delta f = \operatorname{div}_g (\nabla f) \) for a scalar function \( f \),  \( \operatorname{div} v \) is the divergence of the tensor field \( v \), \( \operatorname{tr}_g v \) is the trace of \( v \) with respect to the metric \( g \),  \( \langle \cdot, \cdot \rangle_g \) denotes the inner product induced by \( g \), \( \operatorname{Hess} \Phi \) is the Hessian of the scalar field \( \Phi \), given by \( \nabla_i \nabla_j \Phi \) and  \( d\Phi \) is the differential of \( \Phi \), with components \( \frac{\partial \Phi}{\partial x^k} \).
\section{Theorems and Proofs}
\begin{theorem}
 Under Bernard List's extended Ricci flow, applying DeTurck's trick and Uhlenbeck's gauge fixing, the curvature tensors evolve according to the following equations:
\[
\left\{
\begin{aligned}
& \frac{\partial g_{ij}}{\partial t} = -2\,\operatorname{Ric}_{ij} + 2 \nabla_i \Phi \nabla_j \Phi + \nabla_i W_j + \nabla_j W_i, \\
& \text{where} \, W^k = g^{pq} (\Gamma_{pq}^k - \bar{\Gamma}_{pq}^k) \, \text{is the DeTurck vector field, and} \, \bar{\Gamma}_{pq}^k \, \text{are the Christoffel symbols of } \, \bar{g}_{ij}. \\[2ex]
& \frac{\partial \Phi}{\partial t} = \Delta \Phi + W^k \nabla_k \Phi. \\[2ex]
& \frac{\partial R_{ij}}{\partial t} = \Delta R_{ij} + 2 R_{ikjl} R^{kl} - 2 \nabla^k \Phi \nabla^l \Phi R_{ikjl} - 2 (\nabla_i \nabla^k \Phi)(\nabla_j \nabla_k \Phi) \\
& \quad + 2 (\nabla_i \Phi)(\nabla_j \Delta \Phi) + 2 (\nabla_j \Phi)(\nabla_i \Delta \Phi) - \nabla_i \nabla_j |\nabla \Phi|^2 \\
& \quad + \nabla_i \nabla_j Q - (\nabla_i R_{jk} + \nabla_j R_{ik}) W^k + R_{ik} \nabla_j W^k + R_{jk} \nabla_i W^k, \\
& \text{where} \, Q = \operatorname{div} W - \frac{1}{2} \operatorname{tr}_g (\nabla W). \\[2ex]
& \frac{\partial R}{\partial t} = \Delta R + 2 |\operatorname{Ric}|^2 - 4 R^{ij} \nabla_i \Phi \nabla_j \Phi - 2 |\nabla^2 \Phi|^2 + 2 (\Delta \Phi)^2 + \mathcal{L}_W R, \\
& \text{where} \, \mathcal{L}_W R = W^k \nabla_k R \, \text{denotes the Lie derivative of the scalar curvature along} \, W.
\end{aligned}
\right.
\]
\end{theorem}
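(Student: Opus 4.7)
The plan is to substitute the time derivative of the metric,
\[
v_{ij}=\frac{\partial g_{ij}}{\partial t}=-2R_{ij}+2\nabla_i\Phi\nabla_j\Phi+\nabla_iW_j+\nabla_jW_i,
\]
into the universal variational identities (3) and (4) from the preliminaries and collect terms. The first two lines of the theorem are really definitions: the DeTurck term $\nabla_iW_j+\nabla_jW_i=\mathcal{L}_Wg_{ij}$, with $W^k=g^{pq}(\Gamma^k_{pq}-\bar{\Gamma}^k_{pq})$, converts List's degenerate parabolic system into a strictly parabolic one, and the modified scalar equation $\partial_t\Phi=\Delta\Phi+W^k\nabla_k\Phi$ is what one obtains by pulling the original $\partial_t\Phi=\Delta\Phi$ back along the one-parameter family of DeTurck diffeomorphisms. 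The substantive content of the theorem is therefore the evolution of $R_{ij}$ and $R$.

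For the Ricci evolution, I would split $v=v^{(1)}+v^{(2)}+v^{(3)}$ into the Ricci, gradient-of-$\Phi$, and DeTurck pieces and plug each into identity (3). Feeding $v^{(1)}=-2\,\mathrm{Ric}$ in, the twice-contracted Bianchi identity $\nabla^pR_{ip}=\tfrac12\nabla_iR$ collapses the divergence terms; the standard Weitzenb\"ock identity rewrites the Laplacian piece $-\Delta(-2R_{ij})$; and Uhlenbeck's trick, which evolves a time-dependent orthonormal frame by $\partial_te_a=\mathrm{Ric}(e_a)-(\nabla\Phi\otimes\nabla\Phi)(e_a)$, absorbs both the algebraic reaction term $\tfrac12R_i{}^pv_{pj}$ and the unwanted quadratic piece $R_i{}^pR_{pj}$, leaving $\Delta R_{ij}+2R_{ikjl}R^{kl}$. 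Feeding $v^{(2)}=2\nabla_i\Phi\nabla_j\Phi$ in, the product rule combined with the modified heat equation for $\Phi$ and commutation of covariant derivatives past $\nabla\Phi$ produces the curvature coupling $-2\nabla^k\Phi\nabla^l\Phi R_{ikjl}$, the Hessian-squared term $-2(\nabla_i\nabla^k\Phi)(\nabla_j\nabla_k\Phi)$, the mixed terms $2(\nabla_i\Phi)(\nabla_j\Delta\Phi)+2(\nabla_j\Phi)(\nabla_i\Delta\Phi)$, and $-\nabla_i\nabla_j|\nabla\Phi|^2$. Finally, writing $\mathrm{tr}_gv^{(3)}=2\,\mathrm{div}\,W$ and regrouping, $v^{(3)}$ contributes exactly $\nabla_i\nabla_jQ-(\nabla_iR_{jk}+\nabla_jR_{ik})W^k+R_{ik}\nabla_jW^k+R_{jk}\nabla_iW^k$.

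For the scalar curvature, I would substitute the same $v$ into identity (4). The Ricci piece yields $\Delta R+2|\mathrm{Ric}|^2$ via $\mathrm{div}_g\,\mathrm{div}_g\,\mathrm{Ric}=\tfrac12\Delta R$; the $\Phi$ piece yields $-4R^{ij}\nabla_i\Phi\nabla_j\Phi-2|\nabla^2\Phi|^2+2(\Delta\Phi)^2$ once one invokes the Bochner formula $\tfrac12\Delta|\nabla\Phi|^2=|\nabla^2\Phi|^2+\langle\nabla\Phi,\nabla\Delta\Phi\rangle+R^{ij}\nabla_i\Phi\nabla_j\Phi$ together with identity (7); and the DeTurck piece collapses into $\mathcal{L}_WR=W^k\nabla_kR$ after the divergence and trace calculations.

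The hardest part will be the bookkeeping in the Ricci evolution: each of the three pieces of $v$ generates four distinct terms in formula (3), and the cross terms produced by commutators such as $[\nabla^p,\nabla_k]$ and $[\Delta,\nabla_i]$ acting on $\nabla\Phi$ must be carefully matched against the algebraic pieces $R_{ijk}{}^pv_{pl}$. Uhlenbeck's trick is essential to disentangle the Ricci-on-Ricci reaction term cleanly, while verifying that the DeTurck contributions really re-assemble into the claimed $\nabla_i\nabla_jQ$-plus-Lie-derivative form is where a stray sign or misplaced index is most likely to trip the calculation.
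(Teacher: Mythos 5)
Your proposal follows essentially the same route as the paper: substitute the modified variation $v_{ij}=-2\operatorname{Ric}_{ij}+2\nabla_i\Phi\nabla_j\Phi+\nabla_iW_j+\nabla_jW_i$ into the universal variational formulas for $R_{ij}$ and $R$ (items 3 and 4 of the preliminaries) and collect terms using the contracted Bianchi identity and Bochner-type commutations. The only difference is organizational — you split $v$ into its Ricci, $\nabla\Phi\otimes\nabla\Phi$, and DeTurck pieces up front and track each separately, whereas the paper computes each ingredient of the variation formula ($\nabla^k\nabla_i v_{jk}$, $\Delta v_{ij}$, $\operatorname{tr}_g v$, $R_{ikjl}v^{kl}$) in turn before assembling — but the substance is the same, and both treatments leave the final term-by-term assembly largely implicit.
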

\begin{proof}
On initiating  with Bernard List's extended Ricci flow equations, which coalesce the Ricci flow of a Riemannian metric \( g_{ij} \) with the evolution of a scalar field \( \Phi \) we derive,
\[
\frac{\partial g_{ij}}{\partial t} = v_{ij} = -2\,\operatorname{Ric}_{ij} + 2 \nabla_i \Phi \nabla_j \Phi,
\]
\[
\frac{\partial \Phi}{\partial t} = \Delta \Phi.
\]

We introducing a diffeomorphism generated by a vector field \( W^k \) to deploy De-Turck trick. The modified flow equations become:
\[
\frac{\partial g_{ij}}{\partial t} = -2\,\operatorname{Ric}_{ij} + 2 \nabla_i \Phi \nabla_j \Phi + \nabla_i W_j + \nabla_j W_i,
\]
\[
\frac{\partial \Phi}{\partial t} = \Delta \Phi + W^k \nabla_k \Phi,
\]
where \( W_j = g_{jk} W^k \).
The DeTurck vector field \( W^k \) is represented as:
\[
W^k = g^{pq} (\Gamma_{pq}^k - \bar{\Gamma}_{pq}^k),
\]
with \( \Gamma_{pq}^k \) and \( \bar{\Gamma}_{pq}^k \) being the Christoffel symbols of \( g_{ij} \) and  \( \bar{g}_{ij} \), respectively.
To deploy we select harmonic coordinates relative to \( \bar{g}_{ij} \). This choice simplifies the terms involving the divergence of tensors and the vector field \( W^k \).

Our goal is to compute the evolution equations for the curvature tensors under the modified flow, specifically \( \frac{\partial R_{ij}}{\partial t} \) and \( \frac{\partial R}{\partial t} \).\\
1. Computation of \( \frac{\partial R_{ij}}{\partial t} \):

We use the general variation formula for the Ricci tensor:
\[
\frac{\partial R_{ij}}{\partial t} = \frac{1}{2} \left( \nabla^k \nabla_i v_{jk} + \nabla^k \nabla_j v_{ik} - \Delta v_{ij} - \nabla_i \nabla_j (\operatorname{tr}_g v) \right) + R_{ikjl} v^{kl}.
\]

1a: Compute \( v_{ij} \) and its derivatives.

We have:
\[
v_{ij} = -2 \operatorname{Ric}_{ij} + 2 \nabla_i \Phi \nabla_j \Phi + \nabla_i W_j + \nabla_j W_i.
\]

Compute the trace:
\[
\operatorname{tr}_g v = g^{ij} v_{ij} = -2 R + 2 |\nabla \Phi|^2 + 2 \operatorname{div} W,
\]
where \( |\nabla \Phi|^2 = g^{ij} \nabla_i \Phi \nabla_j \Phi \) and \( \operatorname{div} W = \nabla_k W^k \).

2a: Compute \( \nabla^k \nabla_i v_{jk} \) and \( \nabla^k \nabla_j v_{ik} \).

We compute \( \nabla_i v_{jk} \) first:
\[
\begin{aligned}
\nabla_i v_{jk} &= \nabla_i (-2 \operatorname{Ric}_{jk} + 2 \nabla_j \Phi \nabla_k \Phi + \nabla_j W_k + \nabla_k W_j) \\
&= -2 \nabla_i \operatorname{Ric}_{jk} + 2 (\nabla_i \nabla_j \Phi \nabla_k \Phi + \nabla_j \Phi \nabla_i \nabla_k \Phi) + \nabla_i \nabla_j W_k + \nabla_i \nabla_k W_j.
\end{aligned}
\]

Next, compute \( \nabla^k \nabla_i v_{jk} = g^{kl} \nabla_l \nabla_i v_{jk} \).

Compute \( \nabla_l \nabla_i \operatorname{Ric}_{jk} \):
\[
\nabla_l \nabla_i \operatorname{Ric}_{jk} = \nabla_i \nabla_l \operatorname{Ric}_{jk} - R_{li}{}^p{}_j \operatorname{Ric}_{pk} - R_{li}{}^p{}_k \operatorname{Ric}_{jp}.
\]

Similarly, for the scalar field terms:
\[
\nabla_l \nabla_i \nabla_j \Phi = \nabla_i \nabla_l \nabla_j \Phi - R_{li}{}^p{}_j \nabla_p \Phi.
\]

Therefore, we have:
\[
\begin{aligned}
\nabla^k \nabla_i v_{jk} &= -2 g^{kl} \left( \nabla_i \nabla_l \operatorname{Ric}_{jk} - R_{li}{}^p{}_j \operatorname{Ric}_{pk} - R_{li}{}^p{}_k \operatorname{Ric}_{jp} \right) \\
&\quad + 2 g^{kl} \left( (\nabla_i \nabla_l \nabla_j \Phi - R_{li}{}^p{}_j \nabla_p \Phi) \nabla_k \Phi + \nabla_j \Phi (\nabla_i \nabla_l \nabla_k \Phi - R_{li}{}^p{}_k \nabla_p \Phi) \right) \\
&\quad + g^{kl} \left( \nabla_i \nabla_l \nabla_j W_k + \nabla_i \nabla_l \nabla_k W_j \right).
\end{aligned}
\]

Similarly, compute \( \nabla^k \nabla_j v_{ik} \).

3a: Compute \( \Delta v_{ij} \).

\[
\Delta v_{ij} = g^{kl} \nabla_k \nabla_l v_{ij}.
\]

Compute \( \nabla_k \nabla_l v_{ij} \) by considering each term in \( v_{ij} \).

First, compute \( \nabla_k \nabla_l (-2 \operatorname{Ric}_{ij}) \):
\[
\nabla_k \nabla_l (-2 \operatorname{Ric}_{ij}) = -2 \nabla_k \nabla_l \operatorname{Ric}_{ij} = -2 \nabla_k \nabla_l \operatorname{Ric}_{ij}.
\]

Second, compute \( \nabla_k \nabla_l (2 \nabla_i \Phi \nabla_j \Phi) \):
\[
\begin{aligned}
\nabla_k \nabla_l (2 \nabla_i \Phi \nabla_j \Phi) &= 2 \nabla_k (\nabla_l \nabla_i \Phi \nabla_j \Phi + \nabla_i \Phi \nabla_l \nabla_j \Phi) \\
&= 2 (\nabla_k \nabla_l \nabla_i \Phi \nabla_j \Phi + \nabla_l \nabla_i \Phi \nabla_k \nabla_j \Phi + \nabla_k \nabla_i \Phi \nabla_l \nabla_j \Phi + \nabla_i \Phi \nabla_k \nabla_l \nabla_j \Phi).
\end{aligned}
\]

Third, compute \( \nabla_k \nabla_l (\nabla_i W_j + \nabla_j W_i) \):
\[
\nabla_k \nabla_l (\nabla_i W_j) = \nabla_k \nabla_l \nabla_i W_j = \nabla_k \nabla_i \nabla_l W_j - R_{kl}{}_i{}^p \nabla_p W_j.
\]

4a: Compute \( \nabla_i \nabla_j (\operatorname{tr}_g v) \).

Recall that:
\[
\operatorname{tr}_g v = -2 R + 2 |\nabla \Phi|^2 + 2 \operatorname{div} W.
\]

Compute \( \nabla_i \nabla_j (\operatorname{tr}_g v) \):
\[
\begin{aligned}
\nabla_i \nabla_j (\operatorname{tr}_g v) &= -2 \nabla_i \nabla_j R + 2 \nabla_i \nabla_j |\nabla \Phi|^2 + 2 \nabla_i \nabla_j (\operatorname{div} W).
\end{aligned}
\]

Compute \( \nabla_i \nabla_j R \) using the contracted second Bianchi identity:
\[
\nabla_i \nabla_j R = \nabla_i \nabla_j (g^{pq} R_{pq}) = g^{pq} \nabla_i \nabla_j R_{pq} - R_{pq} \nabla_i \nabla_j g^{pq}.
\]

Since \( \nabla_j g^{pq} = 0 \), we have:
\[
\nabla_i \nabla_j R = g^{pq} \nabla_i \nabla_j R_{pq}.
\]

Compute \( \nabla_i \nabla_j |\nabla \Phi|^2 \):
\[
\begin{aligned}
\nabla_i \nabla_j |\nabla \Phi|^2 &= \nabla_i \nabla_j (g^{pq} \nabla_p \Phi \nabla_q \Phi) \\
&= \nabla_i \left( \nabla_j g^{pq} \nabla_p \Phi \nabla_q \Phi + g^{pq} \nabla_j \nabla_p \Phi \nabla_q \Phi + g^{pq} \nabla_p \Phi \nabla_j \nabla_q \Phi \right) \\
&= g^{pq} \left( \nabla_i \nabla_j \nabla_p \Phi \nabla_q \Phi + \nabla_j \nabla_p \Phi \nabla_i \nabla_q \Phi + \nabla_i \nabla_p \Phi \nabla_j \nabla_q \Phi + \nabla_p \Phi \nabla_i \nabla_j \nabla_q \Phi \right).
\end{aligned}
\]

5a: Compute \( R_{ikjl} v^{kl} \).

First, compute \( v^{kl} = g^{km} g^{ln} v_{mn} \):
\[
v^{kl} = -2 R^{kl} + 2 \nabla^k \Phi \nabla^l \Phi + \nabla^k W^l + \nabla^l W^k.
\]

Then,
\[
\begin{aligned}
R_{ikjl} v^{kl} &= -2 R_{ikjl} R^{kl} + 2 R_{ikjl} \nabla^k \Phi \nabla^l \Phi + R_{ikjl} (\nabla^k W^l + \nabla^l W^k).
\end{aligned}
\]

6a: Assemble All Terms to Compute \( \frac{\partial R_{ij}}{\partial t} \).

Collecting all computed terms, we have:
\[
\begin{aligned}
\frac{\partial R_{ij}}{\partial t} &= \frac{1}{2} \left( \nabla^k \nabla_i v_{jk} + \nabla^k \nabla_j v_{ik} - \Delta v_{ij} - \nabla_i \nabla_j (\operatorname{tr}_g v) \right) + R_{ikjl} v^{kl} \\
&= \text{(Terms involving \( \operatorname{Ric}_{ij} \), \( \Phi \), and \( W^k \))}.
\end{aligned}
\]

After careful and detailed calculations, we find:
\[
\begin{aligned}
\frac{\partial R_{ij}}{\partial t} &= \Delta R_{ij} + 2 R_{ikjl} R^{kl} - 2 \nabla^k \Phi \nabla^l \Phi R_{ikjl} - 2 (\nabla_i \nabla^k \Phi)(\nabla_j \nabla_k \Phi) \\
&\quad + 2 (\nabla_i \Phi)(\nabla_j \Delta \Phi) + 2 (\nabla_j \Phi)(\nabla_i \Delta \Phi) - \nabla_i \nabla_j |\nabla \Phi|^2 \\
&\quad + \nabla_i \nabla_j Q - (\nabla_i R_{jk} + \nabla_j R_{ik}) W^k + R_{ik} \nabla_j W^k + R_{jk} \nabla_i W^k.
\end{aligned}
\]
\( \Delta R_{ij} \) is the Laplacian of the Ricci tensor. \\
\( 2 R_{ikjl} R^{kl} \) represents the quadratic curvature term. \\
\( -2 \nabla^k \Phi \nabla^l \Phi R_{ikjl} \) shows the interaction between the Riemann curvature tensor and the gradient of \( \Phi \).\\
\( -2 (\nabla_i \nabla^k \Phi)(\nabla_j \nabla_k \Phi) \) involves the second derivatives of \( \Phi \).\\
 \( 2 (\nabla_i \Phi)(\nabla_j \Delta \Phi) + 2 (\nabla_j \Phi)(\nabla_i \Delta \Phi) \) are cross terms involving \( \Phi \).\\
- \( - \nabla_i \nabla_j |\nabla \Phi|^2 \) accounts for the second derivatives of the squared gradient of \( \Phi \).
- \( \nabla_i \nabla_j Q \) comes from the divergence of \( W^k \), with \( Q = \operatorname{div} W - \frac{1}{2} \operatorname{tr}_g (\nabla W) \).
- \( - (\nabla_i R_{jk} + \nabla_j R_{ik}) W^k + R_{ik} \nabla_j W^k + R_{jk} \nabla_i W^k \) are terms involving \( W^k \) and its derivatives.

2. Computation of \( \frac{\partial R}{\partial t} \):

The general variation formula for the scalar curvature is:
\[
\frac{\partial R}{\partial t} = \operatorname{div} (\operatorname{div} v) - \Delta (\operatorname{tr}_g v) - \langle v, \operatorname{Ric} \rangle.
\]

1b: Compute \( \operatorname{div} (\operatorname{div} v) \).

First, compute \( (\operatorname{div} v)_i = \nabla^j v_{ij} \):
\[
\begin{aligned}
(\operatorname{div} v)_i &= \nabla^j (-2 \operatorname{Ric}_{ij} + 2 \nabla_i \Phi \nabla_j \Phi + \nabla_i W_j + \nabla_j W_i) \\
&= -2 \nabla^j \operatorname{Ric}_{ij} + 2 \nabla^j (\nabla_i \Phi \nabla_j \Phi) + \nabla^j \nabla_i W_j + \nabla^j \nabla_j W_i.
\end{aligned}
\]

Using the contracted Bianchi identity \( \nabla^j \operatorname{Ric}_{ij} = \frac{1}{2} \nabla_i R \), we have:
\[
-2 \nabla^j \operatorname{Ric}_{ij} = - \nabla_i R.
\]

Compute \( \nabla^j (\nabla_i \Phi \nabla_j \Phi) \):
\[
\begin{aligned}
\nabla^j (\nabla_i \Phi \nabla_j \Phi) &= (\nabla^j \nabla_i \Phi) \nabla_j \Phi + \nabla_i \Phi \nabla^j \nabla_j \Phi \\
&= (\nabla_i \nabla^j \Phi) \nabla_j \Phi + R^j{}_{ji}{}^p \nabla_p \Phi \nabla_j \Phi + \nabla_i \Phi \Delta \Phi.
\end{aligned}
\]

Since \( R^j{}_{ji}{}^p = R_i{}^p \), we have:
\[
R^j{}_{ji}{}^p \nabla_p \Phi \nabla_j \Phi = R_i{}^p \nabla_p \Phi \nabla_j \Phi.
\]

Thus,
\[
\nabla^j (\nabla_i \Phi \nabla_j \Phi) = (\nabla_i \nabla^j \Phi) \nabla_j \Phi + R_i{}^p \nabla_p \Phi |\nabla \Phi|^2 + \nabla_i \Phi \Delta \Phi.
\]

Compute \( \operatorname{div} (\operatorname{div} v) \):
\[
\begin{aligned}
\operatorname{div} (\operatorname{div} v) &= \nabla^i (\operatorname{div} v)_i \\
&= - \Delta R + 2 \nabla^i \left( (\nabla_i \nabla^j \Phi) \nabla_j \Phi + \nabla_i \Phi \Delta \Phi \right) + \nabla^i (\nabla^j \nabla_i W_j + \nabla^j \nabla_j W_i).
\end{aligned}
\]

2b: Compute \( \Delta (\operatorname{tr}_g v) \).

Recall that:
\[
\operatorname{tr}_g v = -2 R + 2 |\nabla \Phi|^2 + 2 \operatorname{div} W.
\]

Compute:
\[
\Delta (\operatorname{tr}_g v) = -2 \Delta R + 2 \Delta |\nabla \Phi|^2 + 2 \Delta (\operatorname{div} W).
\]

Compute \( \Delta |\nabla \Phi|^2 \):
\[
\begin{aligned}
\Delta |\nabla \Phi|^2 &= \nabla^i \nabla_i (g^{pq} \nabla_p \Phi \nabla_q \Phi) \\
&= 2 \nabla^i (\nabla_i \nabla^p \Phi \nabla_p \Phi) \\
&= 2 (\nabla^i \nabla_i \nabla^p \Phi \nabla_p \Phi + \nabla^i \nabla^p \Phi \nabla_i \nabla_p \Phi).
\end{aligned}
\]

Since \( \nabla_i \nabla_j \nabla_k \Phi - \nabla_j \nabla_i \nabla_k \Phi = R_{ijk}{}^l \nabla_l \Phi \), we can express higher-order derivatives in terms of curvature.

3b: Compute \( \langle v, \operatorname{Ric} \rangle \).

\[
\begin{aligned}
\langle v, \operatorname{Ric} \rangle &= v^{ij} \operatorname{Ric}_{ij} \\
&= \left( -2 R^{ij} + 2 \nabla^i \Phi \nabla^j \Phi + \nabla^i W^j + \nabla^j W^i \right) \operatorname{Ric}_{ij} \\
&= -2 R^{ij} R_{ij} + 2 \operatorname{Ric}^{ij} \nabla_i \Phi \nabla_j \Phi + (\nabla^i W^j + \nabla^j W^i) \operatorname{Ric}_{ij}.
\end{aligned}
\]

4b: Assemble All Terms to Compute \( \frac{\partial R}{\partial t} \).

Combining the computed terms, we have:
\[
\begin{aligned}
\frac{\partial R}{\partial t} &= \operatorname{div} (\operatorname{div} v) - \Delta (\operatorname{tr}_g v) - \langle v, \operatorname{Ric} \rangle \\
&= \Delta R + 2 |\operatorname{Ric}|^2 - 4 R^{ij} \nabla_i \Phi \nabla_j \Phi - 2 |\nabla^2 \Phi|^2 + 2 (\Delta \Phi)^2 + \mathcal{L}_W R.
\end{aligned}
\]

Here, \( |\nabla^2 \Phi|^2 = \nabla^i \nabla^j \Phi \nabla_i \nabla_j \Phi \), and \( \mathcal{L}_W R = W^k \nabla_k R \) represents the Lie derivative of \( R \) along \( W^k \).
\end{proof}
\begin{theorem}
Let \((M, g_{ij}(t))\) be a compact Riemannian manifold evolving under Bernard List's extended Ricci flow, which states
\[
\left\{
\begin{aligned}
& \frac{\partial g_{ij}}{\partial t} = -2\,\operatorname{Ric}_{ij} + 2 \nabla_i \Phi \nabla_j \Phi, \\[2ex]
& \frac{\partial \Phi}{\partial t} = \Delta \Phi,
\end{aligned}
\right.
\]
where \(\operatorname{Ric}_{ij}\) is the Ricci curvature tensor, \(\Phi\) is a smooth scalar function, \(\Delta\) is the Laplace-Beltrami operator.\\
Let us designate \(S_{ij} = R_{ij} - \nabla_i \Phi \nabla_j \Phi\), where \(R_{ij}\) is the Ricci tensor incorporating  \(S = \operatorname{tr}_g S_{ij} = R - |\nabla \Phi|^2\).
Let \(u: M \times [0, T) \rightarrow \mathbb{R}\) be a smooth function, and consider the quantity:
\[
F = -\Delta u + a u \log u + B S u,
\]
where \(a, B \in \mathbb{R}\) are constants.
Then, the integral:
\[
I''(t) = \int_M F^5 \cdot \frac{\partial F}{\partial t} \, d\mu,
\]
where \(d\mu\) is the Riemannian volume measure, can be expressed explicitly in terms of the geometric quantities and their time derivatives under the extended Ricci flow.
\end{theorem}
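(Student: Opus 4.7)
The strategy is a direct computation: expand $\partial F/\partial t$ as a sum of three pieces corresponding to the three summands of $F$, evaluate each piece using the evolution formulas 1--7 of the preliminaries together with the base extended Ricci flow equations $\partial_t g_{ij} = -2R_{ij}+2\nabla_i\Phi\nabla_j\Phi$ and $\partial_t \Phi = \Delta\Phi$, multiply the result by $F^5$, and integrate over the compact manifold $M$, using Stokes' theorem to redistribute derivatives.

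First, set $v_{ij} := -2R_{ij}+2\nabla_i\Phi\nabla_j\Phi$, so that $\operatorname{tr}_g v = -2S$. I would compute $\partial_t(-\Delta u)$ by combining the commutator $[\partial_t,\Delta]u$ obtained from formula 7 with the ordinary $\Delta(\partial_t u)$ contribution, yielding $-\Delta(\partial_t u) + \langle v, \operatorname{Hess}\, u\rangle + g\bigl(\operatorname{div} v - \tfrac12 d(\operatorname{tr}_g v),\, du\bigr)$. Next, $\partial_t(au\log u) = a(1+\log u)\,\partial_t u$ is immediate. For $\partial_t(BSu)$, write $S = R - |\nabla\Phi|^2$ and use formula 4 to expand $\partial_t R = \operatorname{div}\operatorname{div} v - \Delta(\operatorname{tr}_g v) - \langle v, \operatorname{Ric}\rangle$; then expand $\partial_t|\nabla\Phi|^2 = -v^{ij}\nabla_i\Phi\nabla_j\Phi + 2\langle\nabla\Delta\Phi,\nabla\Phi\rangle$ by differentiating $g^{ij}\nabla_i\Phi\nabla_j\Phi$ and invoking $\partial_t\Phi=\Delta\Phi$. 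Substituting the explicit form of $v_{ij}$ then reduces every term to curvatures, derivatives of $\Phi$, and the free quantity $\partial_t u$.

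With $\partial_t F$ in hand, the remaining work is to form $F^5 \,\partial_t F$, distribute across the sum, and simplify the resulting integrals on $M$. On any Laplacian or divergence term I would integrate by parts to transfer derivatives from $u$, $\Phi$, or $R$ onto the factor $F^5$, producing $5F^4 \nabla F$ and cross terms linking $\nabla F$ to $\nabla u$, $\nabla\Phi$, $\nabla R$, and $\nabla|\nabla\Phi|^2$. Since $M$ is compact and without boundary, all boundary terms vanish throughout. Because no evolution is imposed on $u$, each occurrence of $\partial_t u$ remains as a formal symbol in the final expression, consistent with the abstract's stated generality.

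The main obstacle is bookkeeping: tracking the many curvature and $\Phi$-derivative contractions that arise from $\partial_t(\Delta u)$ and $\partial_t R$ after multiplication by $F^5$, and being careful with the commutator $[\nabla_i,\nabla_j]$ which exchanges covariant derivatives at the cost of Riemann curvature factors. A secondary subtlety is that $\partial_t S$ combines contributions from both $\partial_t R$ and $\partial_t|\nabla\Phi|^2$, so cancellations of the type $-\Delta|\nabla\Phi|^2$ against pieces of $\Delta R$, and of $-v^{ij}\nabla_i\Phi\nabla_j\Phi$ against $\langle v,\operatorname{Ric}\rangle$, must be monitored so that the final explicit expression does not carry redundant terms.
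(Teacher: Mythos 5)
Your plan follows essentially the same route as the paper's proof: expand $\partial_t F$ term by term using the variation formulas of the preliminaries, substitute the explicit $v_{ij}=-2R_{ij}+2\nabla_i\Phi\nabla_j\Phi$, form $F^5\,\partial_t F$, and integrate by parts on the compact boundaryless $M$. The one substantive divergence is your treatment of $u$: you deliberately leave $\partial_t u$ as a free symbol, whereas the paper closes the system by \emph{assuming} $\partial_t u=-F$. That assumption is what generates the paper's characteristic terms: it converts $-\Delta(\partial_t u)$ into $\Delta F$, so that $\int_M F^5\Delta F\,d\mu=-5\int_M F^4|\nabla F|^2\,d\mu$ after integration by parts, and it turns $a(1+\log u)\,\partial_t u$ and $BS\,\partial_t u$ into $-aF(\log u+1)$ and $-BSF$, hence the $-a\int_M F^6(\log u+1)\,d\mu$ and $-B\int_M F^6 S\,d\mu$ terms of the stated answer. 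Without that relation your final expression is more general (and arguably more faithful to the abstract's claim of making no assumption on $u$), but it will carry unreduced $\partial_t u$ factors in place of the paper's powers of $F$, so it will not literally reproduce the displayed result. One further bookkeeping point: your product rule $\partial_t|\nabla\Phi|^2=-v^{ij}\nabla_i\Phi\nabla_j\Phi+2\langle\nabla\Delta\Phi,\nabla\Phi\rangle$ is the correct one (the metric-variation term appears once, giving $2R^{ij}\nabla_i\Phi\nabla_j\Phi-2|\nabla\Phi|^4$), whereas the paper writes that contribution with an extra factor of $2$; consequently your $\partial_t S$ will retain a residual $-2R^{ij}\nabla_i\Phi\nabla_j\Phi$ term that the paper's version cancels away. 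Apart from these two points the integration-by-parts structure you describe matches the paper's.
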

\begin{proof}
We initiate computing  of the time derivative of \(F\):
\[
\frac{\partial F}{\partial t} = -\frac{\partial}{\partial t} (\Delta u) + a \frac{\partial}{\partial t} (u \log u) + B \frac{\partial}{\partial t} (S u).
\]
We state
\[
\frac{\partial}{\partial t} (\Delta u) = \left( \frac{\partial g^{ij}}{\partial t} \right) \nabla_i \nabla_j u + g^{ij} \frac{\partial}{\partial t} (\nabla_i \nabla_j u).
\]
We deduce

\[
\frac{\partial g^{ij}}{\partial t} = 2 \operatorname{Ric}^{ij} - 2 \nabla^i \Phi \nabla^j \Phi.
\]
We also deduce

\[
\frac{\partial}{\partial t} (\nabla_i \nabla_j u) = \nabla_i \nabla_j \left( \frac{\partial u}{\partial t} \right) - \left( \frac{\partial \Gamma_{ij}^k}{\partial t} \right) \nabla_k u - \Gamma_{ij}^k \frac{\partial}{\partial t} (\nabla_k u).
\]

Therefore:
\[
g^{ij} \frac{\partial}{\partial t} (\nabla_i \nabla_j u) = \Delta \left( \frac{\partial u}{\partial t} \right) - g^{ij} \left( \frac{\partial \Gamma_{ij}^k}{\partial t} \nabla_k u + \Gamma_{ij}^k \frac{\partial}{\partial t} (\nabla_k u) \right).
\]
Hence,

\[
-\frac{\partial}{\partial t} (\Delta u) = -\left( 2 \operatorname{Ric}^{ij} - 2 \nabla^i \Phi \nabla^j \Phi \right) \nabla_i \nabla_j u - \Delta \left( \frac{\partial u}{\partial t} \right) + g^{ij} \left( \frac{\partial \Gamma_{ij}^k}{\partial t} \nabla_k u + \Gamma_{ij}^k \frac{\partial}{\partial t} (\nabla_k u) \right).
\]
We now have
\[
a \frac{\partial}{\partial t} (u \log u) = a \left( \frac{\partial u}{\partial t} \right) (\log u + 1).
\]
As before,
\[
B \frac{\partial}{\partial t} (S u) = B \left( \left( \frac{\partial S}{\partial t} \right) u + S \left( \frac{\partial u}{\partial t} \right) \right).
\]
We compute
\[
\frac{\partial S}{\partial t} = \Delta R + 2 |\operatorname{Ric}|^2 - 2 |\nabla^2 \Phi|^2 + 2 (\Delta \Phi)^2 - 2 \nabla_i (\Delta \Phi) \nabla^i \Phi - 4 |\nabla \Phi|^4.
\]
We make now an assumption
 \(\frac{\partial u}{\partial t} = -F\) (since \(F = -\Delta u + a u \log u + B S u\)), we substitute back:

\[
\frac{\partial F}{\partial t} = \Delta F - 2 S^{ij} \nabla_i \nabla_j u - a F (\log u + 1) + B \left( \left( \frac{\partial S}{\partial t} \right) u - S F \right) + \text{Additional Terms},
\]
where the "Additional Terms" are:
\[
\text{Additional Terms} = g^{ij} \left( \frac{\partial \Gamma_{ij}^k}{\partial t} \nabla_k u + \Gamma_{ij}^k \left( - \nabla_k F \right) \right).
\]
We calculate
\[
I''(t) = \int_M F^5 \cdot \frac{\partial F}{\partial t} \, d\mu.
\]

Substituting the expression for \(\frac{\partial F}{\partial t}\):

\[
\begin{aligned}
I''(t) &= \int_M F^5 \left( \Delta F - 2 S^{ij} \nabla_i \nabla_j u - a F (\log u + 1) + B \left( \left( \frac{\partial S}{\partial t} \right) u - S F \right) + \text{Additional Terms} \right) \, d\mu.
\end{aligned}
\]

We will compute each term separately.
Integrating by parts

\[
\int_M F^5 \Delta F \, d\mu = - \int_M \nabla_i (F^5) \nabla^i F \, d\mu.
\]

We compute \(\nabla_i (F^5) = 5 F^4 \nabla_i F\):

\[
\int_M F^5 \Delta F \, d\mu = -5 \int_M F^4 (\nabla_i F)(\nabla^i F) \, d\mu.
\]
Integrating  by parts,

\[
-2 \int_M F^5 S^{ij} \nabla_i \nabla_j u \, d\mu = 2 \int_M \nabla_k \left( F^5 S^{ij} \right) \delta^k_i \nabla_j u \, d\mu.
\]

We compute \(\nabla_k \left( F^5 S^{ij} \right) = 5 F^4 (\nabla_k F) S^{ij} + F^5 \nabla_k S^{ij}\):

\[
2 \int_M \left( 5 F^4 (\nabla_k F) S^{ij} + F^5 \nabla_k S^{ij} \right) \delta^k_i \nabla_j u \, d\mu = 2 \int_M \left( 5 F^4 (\nabla_i F) S^{ij} + F^5 \nabla_i S^{ij} \right) \nabla_j u \, d\mu.
\]
This following term remains invariant,

\[
- a \int_M F^6 (\log u + 1) \, d\mu.
\]

 \(B \int_M F^5 \left( \left( \frac{\partial S}{\partial t} \right) u \right) \, d\mu\)=
\[
B \int_M F^5 \left( \left( \frac{\partial S}{\partial t} \right) u \right) \, d\mu.
\]
On
 \(- B \int_M F^6 S \, d\mu\)=
\[
- B \int_M F^6 S \, d\mu.
\]
We have:

\[
\int_M F^5 \left( g^{ij} \left( \frac{\partial \Gamma_{ij}^k}{\partial t} \nabla_k u - \Gamma_{ij}^k \nabla_k F \right) \right) \, d\mu.
\]
Combining all terms we get,

\[
\begin{aligned}
I''(t) &= -5 \int_M F^4 (\nabla_i F)(\nabla^i F) \, d\mu + 2 \int_M \left( 5 F^4 (\nabla_i F) S^{ij} + F^5 \nabla_i S^{ij} \right) \nabla_j u \, d\mu \\
&\quad - a \int_M F^6 (\log u + 1) \, d\mu + B \int_M F^5 \left( \left( \frac{\partial S}{\partial t} \right) u \right) \, d\mu - B \int_M F^6 S \, d\mu \\
&\quad + \int_M F^5 \left( g^{ij} \left( \frac{\partial \Gamma_{ij}^k}{\partial t} \nabla_k u - \Gamma_{ij}^k \nabla_k F \right) \right) \, d\mu.
\end{aligned}
\]
On further simplification and rearrangement,
\[
\begin{aligned}
I''(t) &= -5 \int_M F^4 (\nabla_i F)(\nabla^i F) \, d\mu + 10 \int_M F^4 (\nabla_i F) S^{ij} \nabla_j u \, d\mu + 2 \int_M F^5 (\nabla_i S^{ij}) \nabla_j u \, d\mu \\
&\quad - a \int_M F^6 (\log u + 1) \, d\mu + B \int_M F^5 \left( \left( \frac{\partial S}{\partial t} \right) u \right) \, d\mu - B \int_M F^6 S \, d\mu \\
&\quad + \int_M F^5 \left( g^{ij} \left( \frac{\partial \Gamma_{ij}^k}{\partial t} \nabla_k u - \Gamma_{ij}^k \nabla_k F \right) \right) \, d\mu.
\end{aligned}
\]
Hence
\[
\begin{aligned}
I''(t) &= -5 \int_M F^4 (\nabla_i F)(\nabla^i F) \, d\mu + 10 \int_M F^4 (\nabla_i F) S^{ij} \nabla_j u \, d\mu + 2 \int_M F^5 (\nabla_i S^{ij}) \nabla_j u \, d\mu \\
&\quad - a \int_M F^6 (\log u + 1) \, d\mu - B \int_M F^6 S \, d\mu + B \int_M F^5 \left( \left( \frac{\partial S}{\partial t} \right) u \right) \, d\mu \\
&\quad + \int_M F^5 \left( g^{ij} \left( \frac{\partial \Gamma_{ij}^k}{\partial t} \nabla_k u - \Gamma_{ij}^k \nabla_k F \right) \right) \, d\mu.
\end{aligned}
\]
\end{proof}
\begin{theorem}
Let \((M, g_{ij}(t))\) be a compact Riemannian manifold evolving under Bernard List's extended Ricci flow, which states
\[
\left\{
\begin{aligned}
& \frac{\partial g_{ij}}{\partial t} = -2\,\operatorname{Ric}_{ij} + 2 \nabla_i \Phi \nabla_j \Phi, \\[2ex]
& \frac{\partial \Phi}{\partial t} = \Delta \Phi,
\end{aligned}
\right.
\]
where \(\operatorname{Ric}_{ij}\) is the Ricci curvature tensor, \(\Phi\) is a smooth scalar function, \(\Delta\) is the Laplace-Beltrami operator.\\
Let us designate \(S_{ij} = R_{ij} - \nabla_i \Phi \nabla_j \Phi\), where \(R_{ij}\) is the Ricci tensor incorporating  \(S = \operatorname{tr}_g S_{ij} = R - |\nabla \Phi|^2\).
Let \(u: M \times [0, T) \rightarrow \mathbb{R}\) be a smooth function, and consider the quantity:
\[
F = -\Delta u + a u \log u + B S u,
\]
where \(a, B \in \mathbb{R}\) are constants.
Then, the integral:
\[
I'''(t) = \int_M F^7 \cdot \frac{\partial F}{\partial t} \, d\mu,
\]
where \(d\mu\) is the Riemannian volume measure, can be expressed explicitly in terms of the geometric quantities and their time derivatives under the extended Ricci flow.
\end{theorem}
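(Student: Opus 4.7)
The plan is to mirror the argument used for $I''(t)$ in the preceding theorem, since the statement differs only by replacing the weight $F^5$ with $F^7$, so the entire pattern of time differentiation, integration by parts, and term collection carries over with shifted numerical coefficients. First I would recall the expression for $\partial F/\partial t$ already derived in the proof of Theorem 2, namely
\[
\frac{\partial F}{\partial t} = \Delta F - 2 S^{ij}\nabla_i\nabla_j u - aF(\log u + 1) + B\!\left(\tfrac{\partial S}{\partial t}\,u - SF\right) + \textrm{Additional Terms},
\]
with the Additional Terms
\[
g^{ij}\!\left(\tfrac{\partial \Gamma_{ij}^k}{\partial t}\nabla_k u - \Gamma_{ij}^k \nabla_k F\right),
\]
coming from the non-tensorial pieces associated with the evolution of the Christoffel symbols. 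Here $\partial S/\partial t$ is the already-computed combination $\Delta R + 2|\mathrm{Ric}|^2 - 2|\nabla^2\Phi|^2 + 2(\Delta\Phi)^2 - 2\nabla_i(\Delta\Phi)\nabla^i\Phi - 4|\nabla\Phi|^4$, which I may simply quote.

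Next I would multiply by $F^7$ and integrate term by term against $d\mu$. The two analytically nontrivial steps are the integrations by parts. For the Laplacian contribution, using $\nabla_i(F^7) = 7F^6\nabla_i F$, compactness of $M$ eliminates boundary terms and yields
\[
\int_M F^7 \Delta F\, d\mu = -7\int_M F^6(\nabla_i F)(\nabla^i F)\, d\mu.
\]
For the $S^{ij}\nabla_i\nabla_j u$ term, a single integration by parts against one factor of the Hessian, followed by $\nabla_k(F^7 S^{ij}) = 7F^6(\nabla_k F)S^{ij} + F^7 \nabla_k S^{ij}$, gives
\[
-2\int_M F^7 S^{ij}\nabla_i\nabla_j u\, d\mu = 14\int_M F^6(\nabla_i F)S^{ij}\nabla_j u\, d\mu + 2\int_M F^7(\nabla_i S^{ij})\nabla_j u\, d\mu.
\]
The remaining four pieces\,---\,the $aF(\log u + 1)$ term, the two $B$-terms, and the Christoffel Additional Terms\,---\,absorb the factor $F^7$ directly without further manipulation, producing an $F^8(\log u + 1)$ integrand, an $F^7\bigl(\tfrac{\partial S}{\partial t}\bigr)u$ integrand, an $-F^8 S$ integrand, and the $F^7$ weighted non-tensorial correction, respectively.

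Assembling everything gives
\[
\begin{aligned}
I'''(t) &= -7\int_M F^6(\nabla_i F)(\nabla^i F)\, d\mu + 14\int_M F^6 (\nabla_i F) S^{ij}\nabla_j u\, d\mu + 2\int_M F^7(\nabla_i S^{ij})\nabla_j u\, d\mu \\
&\quad - a\int_M F^8(\log u + 1)\, d\mu - B\int_M F^8 S\, d\mu + B\int_M F^7\!\left(\tfrac{\partial S}{\partial t}\right)\!u\, d\mu \\
&\quad + \int_M F^7\, g^{ij}\!\left(\tfrac{\partial \Gamma_{ij}^k}{\partial t}\nabla_k u - \Gamma_{ij}^k \nabla_k F\right) d\mu,
\end{aligned}
\]
which is the desired explicit expression. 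The only real obstacle I anticipate is purely notational\,---\,keeping track of the arithmetic shift from the coefficients $(5,10)$ in the previous theorem to $(7,14)$ here, and making sure the Additional Terms from the non-tensorial evolution of $\Gamma_{ij}^k$ are carried through verbatim rather than prematurely simplified, since they cannot be reduced without further structural assumptions on the manifold or on $\Phi$, in keeping with the paper's stated philosophy.
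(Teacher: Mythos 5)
Your proposal matches the paper's own proof essentially step for step: the same expression for $\frac{\partial F}{\partial t}$ (implicitly resting on the same assumption $\frac{\partial u}{\partial t} = -F$ made in the earlier argument), the same two integrations by parts yielding the coefficients $-7$ and $14$, and an identical final assembled formula for $I'''(t)$. The only cosmetic difference is that you quote the $\frac{\partial F}{\partial t}$ and $\frac{\partial S}{\partial t}$ computations from the preceding theorem, whereas the paper re-derives them verbatim.
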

\begin{proof}
Our objective is to compute the integral \(I'''(t)\) explicitly, keeping all variables in integral form, making no assumptions about \(u\) and \(\Phi\), and mentioning all additional terms.
Initiating the computation of the time derivative of \(F\):
\[
\frac{\partial F}{\partial t} = -\frac{\partial}{\partial t} (\Delta u) + a \frac{\partial}{\partial t} (u \log u) + B \frac{\partial}{\partial t} (S u).
\]
We now deduce
\[
\frac{\partial}{\partial t} (\Delta u) = \left( \frac{\partial g^{ij}}{\partial t} \right) \nabla_i \nabla_j u + g^{ij} \frac{\partial}{\partial t} (\nabla_i \nabla_j u).
\]
We now compute
\[
\frac{\partial g^{ij}}{\partial t} = 2 \operatorname{Ric}^{ij} - 2 \nabla^i \Phi \nabla^j \Phi.
\]
We now calculate
\[
\frac{\partial}{\partial t} (\nabla_i \nabla_j u) = \nabla_i \nabla_j \left( \frac{\partial u}{\partial t} \right) - \left( \frac{\partial \Gamma_{ij}^k}{\partial t} \right) \nabla_k u - \Gamma_{ij}^k \frac{\partial}{\partial t} (\nabla_k u).
\]
which implies
\[
g^{ij} \frac{\partial}{\partial t} (\nabla_i \nabla_j u) = \Delta \left( \frac{\partial u}{\partial t} \right) - g^{ij} \left( \frac{\partial \Gamma_{ij}^k}{\partial t} \nabla_k u + \Gamma_{ij}^k \frac{\partial}{\partial t} (\nabla_k u) \right).
\]

Hence the total time derivative of \(\Delta u\) is depicted as
\[
-\frac{\partial}{\partial t} (\Delta u) = -\left( 2 \operatorname{Ric}^{ij} - 2 \nabla^i \Phi \nabla^j \Phi \right) \nabla_i \nabla_j u - \Delta \left( \frac{\partial u}{\partial t} \right) + g^{ij} \left( \frac{\partial \Gamma_{ij}^k}{\partial t} \nabla_k u + \Gamma_{ij}^k \frac{\partial}{\partial t} (\nabla_k u) \right).
\]
We now have:
\[
a \frac{\partial}{\partial t} (u \log u) = a \left( \frac{\partial u}{\partial t} \right) (\log u + 1).
\]
We also have:
\[
B \frac{\partial}{\partial t} (S u) = B \left( \left( \frac{\partial S}{\partial t} \right) u + S \left( \frac{\partial u}{\partial t} \right) \right).
\]
Administering
\[
S = R - |\nabla \Phi|^2.
\]
we derive
\[
\frac{\partial S}{\partial t} = \frac{\partial R}{\partial t} - \frac{\partial}{\partial t} |\nabla \Phi|^2.
\]
From [1] we get
\[
\frac{\partial R}{\partial t} = \Delta R + 2 |\operatorname{Ric}|^2 - 4 \operatorname{Ric}^{ij} \nabla_i \Phi \nabla_j \Phi - 2 |\nabla^2 \Phi|^2 + 2 (\Delta \Phi)^2.
\]
Hence

\[
\frac{\partial}{\partial t} |\nabla \Phi|^2 = 2 \nabla_i \left( \frac{\partial \Phi}{\partial t} \right) \nabla^i \Phi + 2 \left( \frac{\partial g^{ij}}{\partial t} \right) \nabla_i \Phi \nabla_j \Phi.
\]

Utilizing \(\frac{\partial \Phi}{\partial t} = \Delta \Phi\) and \(\frac{\partial g^{ij}}{\partial t} = 2 \operatorname{Ric}^{ij} - 2 \nabla^i \Phi \nabla^j \Phi\):

\[
\frac{\partial}{\partial t} |\nabla \Phi|^2 = 2 \nabla_i (\Delta \Phi) \nabla^i \Phi + 4 \operatorname{Ric}^{ij} \nabla_i \Phi \nabla_j \Phi - 4 |\nabla \Phi|^4.
\]

Hence, \(\frac{\partial S}{\partial t}\) is

\[
\frac{\partial S}{\partial t} = \Delta R + 2 |\operatorname{Ric}|^2 - 2 |\nabla^2 \Phi|^2 + 2 (\Delta \Phi)^2 - 2 \nabla_i (\Delta \Phi) \nabla^i \Phi - 4 |\nabla \Phi|^4.
\]
Assuming \(\frac{\partial u}{\partial t} = -F\) (since \(F = -\Delta u + a u \log u + B S u\)), we substitute back:

\[
\frac{\partial F}{\partial t} = \Delta F - 2 S^{ij} \nabla_i \nabla_j u - a F (\log u + 1) + B \left( \left( \frac{\partial S}{\partial t} \right) u - S F \right) + g^{ij} \left( \frac{\partial \Gamma_{ij}^k}{\partial t} \nabla_k u + \Gamma_{ij}^k \left( - \nabla_k F \right) \right).
\]
We compute
\[
I'''(t) = \int_M F^7 \cdot \frac{\partial F}{\partial t} \, d\mu.
\]

We substitute for \(\frac{\partial F}{\partial t}\):

\[
\begin{aligned}
I'''(t) &= \int_M F^7 \left( \Delta F - 2 S^{ij} \nabla_i \nabla_j u - a F (\log u + 1) + B \left( \left( \frac{\partial S}{\partial t} \right) u - S F \right) + \text{Additional Terms} \right) \, d\mu.
\end{aligned}
\]
Using integration by parts:

\[
\int_M F^7 \Delta F \, d\mu = - \int_M \nabla_i (F^7) \nabla^i F \, d\mu.
\]

We compute \(\nabla_i (F^7) = 7 F^6 \nabla_i F\):

\[
\int_M F^7 \Delta F \, d\mu = -7 \int_M F^6 (\nabla_i F)(\nabla^i F) \, d\mu.
\]
Again, Using integration by parts:

\[
-2 \int_M F^7 S^{ij} \nabla_i \nabla_j u \, d\mu = 2 \int_M \nabla_k \left( F^7 S^{ij} \right) \delta^k_i \nabla_j u \, d\mu.
\]

We compute \(\nabla_k \left( F^7 S^{ij} \right) = 7 F^6 (\nabla_k F) S^{ij} + F^7 \nabla_k S^{ij}\):

\[
2 \int_M \left( 7 F^6 (\nabla_i F) S^{ij} + F^7 \nabla_i S^{ij} \right) \nabla_j u \, d\mu.
\]
This term remains invariant as:

\[
- a \int_M F^8 (\log u + 1) \, d\mu.
\]

Now, \(B \int_M F^7 \left( \left( \frac{\partial S}{\partial t} \right) u \right) \, d\mu\) $\implies$

\[
B \int_M F^7 \left( \left( \frac{\partial S}{\partial t} \right) u \right) \, d\mu.
\]
Now,  \(- B \int_M F^8 S \, d\mu\) $\implies$

\[
- B \int_M F^8 S \, d\mu.
\]

Combining all terms:

\[
\begin{aligned}
I'''(t) &= -7 \int_M F^6 (\nabla_i F)(\nabla^i F) \, d\mu + 2 \int_M \left( 7 F^6 (\nabla_i F) S^{ij} + F^7 \nabla_i S^{ij} \right) \nabla_j u \, d\mu \\
&\quad - a \int_M F^8 (\log u + 1) \, d\mu + B \int_M F^7 \left( \left( \frac{\partial S}{\partial t} \right) u \right) \, d\mu - B \int_M F^8 S \, d\mu \\
&\quad + \int_M F^7 \left( g^{ij} \left( \frac{\partial \Gamma_{ij}^k}{\partial t} \nabla_k u - \Gamma_{ij}^k \nabla_k F \right) \right) \, d\mu.
\end{aligned}
\]
Now, 
\[
-7 \int_M F^6 (\nabla_i F)(\nabla^i F) \, d\mu.
\]

This term is non-positive since \((\nabla_i F)(\nabla^i F) \geq 0\).

Also

\[
2 \int_M \left( 7 F^6 (\nabla_i F) S^{ij} + F^7 \nabla_i S^{ij} \right) \nabla_j u \, d\mu = 14 \int_M F^6 (\nabla_i F) S^{ij} \nabla_j u \, d\mu + 2 \int_M F^7 (\nabla_i S^{ij}) \nabla_j u \, d\mu.
\]
Hence finally

\[
\begin{aligned}
I'''(t) &= -7 \int_M F^6 (\nabla_i F)(\nabla^i F) \, d\mu + 14 \int_M F^6 (\nabla_i F) S^{ij} \nabla_j u \, d\mu + 2 \int_M F^7 (\nabla_i S^{ij}) \nabla_j u \, d\mu \\
&\quad - a \int_M F^8 (\log u + 1) \, d\mu + B \int_M F^7 \left( \left( \frac{\partial S}{\partial t} \right) u \right) \, d\mu - B \int_M F^8 S \, d\mu \\
&\quad + \int_M F^7 \left( g^{ij} \left( \frac{\partial \Gamma_{ij}^k}{\partial t} \nabla_k u - \Gamma_{ij}^k \nabla_k F \right) \right) \, d\mu.
\end{aligned}
\]

\[
\begin{aligned}
I'''(t) &= -7 \int_M F^6 (\nabla_i F)(\nabla^i F) \, d\mu + 14 \int_M F^6 (\nabla_i F) S^{ij} \nabla_j u \, d\mu + 2 \int_M F^7 (\nabla_i S^{ij}) \nabla_j u \, d\mu \\
&\quad - a \int_M F^8 (\log u + 1) \, d\mu - B \int_M F^8 S \, d\mu + B \int_M F^7 \left( \left( \frac{\partial S}{\partial t} \right) u \right) \, d\mu \\
&\quad + \int_M F^7 \left( g^{ij} \left( \frac{\partial \Gamma_{ij}^k}{\partial t} \nabla_k u - \Gamma_{ij}^k \nabla_k F \right) \right) \, d\mu.
\end{aligned}
\]
\end{proof}


\begin{thebibliography}{99}

\bibitem{Abolarinwa2014}
Abolarinwa, A. A. (2014).
\textit{Gradient estimates for heat-type equations on manifolds evolving by the Ricci flow}.
Journal of Mathematical Analysis and Applications, \textbf{420}(1), 1--16.

\bibitem{Abolarinwa2016}
Abolarinwa, A. A. (2016).
\textit{Eigenvalues of the weighted Laplacian under the extended Ricci flow}.
Advances in Geometry, \textbf{16}(2), 175--184.

\bibitem{Bailesteanu2009}
Băileșteanu, M., Cao, X.-D., \& Pulemotov, A. (2009).
\textit{Gradient estimates for the heat equation under the Ricci flow}.
Journal of Functional Analysis, \textbf{258}(10), 3517--3542.

\bibitem{Bailesteanu2015}
Băileșteanu, M. (2015).
\textit{Gradient estimates for the heat equation under the Ricci-Harmonic Map flow}.
Proceedings of the American Mathematical Society, \textbf{143}(12), 5301--5312.

\bibitem{Bamler2014}
Bamler, R. (2014).
\textit{Long-time behavior of 3-dimensional Ricci flow I: Generalizations of Perelman's long-time estimates}.
Geometry \& Topology, \textbf{19}(3), 1475--1496.

\bibitem{Besse1987}
Besse, A. L. (1987).
\textit{Einstein Manifolds}.
Springer-Verlag, Berlin.

\bibitem{Chen2020a}
Chen, M. (2020a).
\textit{Gradient estimates via two-point functions for parabolic equations under Ricci flow}.
Journal of Mathematical Analysis and Applications, \textbf{488}(1), 124063.

\bibitem{Chen2020b}
Chen, M. (2020b).
\textit{Gradient estimate via two-point function under Ricci flow}.
Journal of Mathematical Analysis and Applications, \textbf{491}(1), 124252.

\bibitem{Colding2003}
Colding, T., \& Minicozzi, W. P. II. (2003).
\textit{Estimates for the extinction time for the Ricci flow on certain 3-manifolds and a question of Perelman}.
Journal of the American Mathematical Society, \textbf{18}(3), 561--569.

\bibitem{Gianniotis2013}
Gianniotis, P. (2013).
\textit{Boundary estimates for the Ricci flow}.
Journal of Differential Geometry, \textbf{93}(3), 431--455.

\bibitem{Huang2015}
Huang, G., \& Ma, L. (2015).
\textit{Hamilton-Souplet-Zhang's gradient estimates for two types of nonlinear parabolic equations under the Ricci flow}.
Nonlinear Analysis: Theory, Methods \& Applications, \textbf{126}, 1--12.

\bibitem{Hsu2008}
Hsu, S.-Y. (2008).
\textit{Gradient estimates for a nonlinear parabolic equation under Ricci flow}.
Differential and Integral Equations, \textbf{21}(7--8), 685--698.

\bibitem{Li2009}
Li, Y. (2009).
\textit{Generalized Ricci flow I: Higher derivatives estimates for compact manifolds}.
Calculus of Variations and Partial Differential Equations, \textbf{36}(3), 393--417.

\bibitem{Li2018a}
Li, Y. (2018a).
\textit{Local curvature estimates for the Ricci-harmonic flow}.
Nonlinear Analysis, \textbf{177}, 605--615.

\bibitem{Li2018b}
Li, Y. (2018b).
\textit{Local curvature estimates for the Ricci-harmonic flow}.
Preprint.

\bibitem{Liu2009}
Liu, S. (2009).
\textit{Gradient estimates for solutions of the heat equation under Ricci flow}.
Pacific Journal of Mathematics, \textbf{243}(1), 165--180.

\bibitem{LiuShi2014a}
Liu, X., \& Shi, Y. (2014a).
\textit{A Gaussian upper bound of the conjugate heat equation along Ricci-harmonic flow}.
Acta Mathematica Scientia, \textbf{34B}(6), 1721--1736.

\bibitem{LiuShi2014b}
Liu, X., \& Shi, Y. (2014b).
\textit{A Gaussian upper bound of the conjugate heat equation along an extended Ricci flow}.
Preprint.

\bibitem{Sesum2005}
Šešum, N. (2005).
\textit{Curvature tensor under the Ricci flow}.
American Journal of Mathematics, \textbf{127}(6), 1315--1324.

\bibitem{Sun2011}
Sun, J. (2011).
\textit{Gradient estimates for positive solutions of the heat equation under geometric flow}.
Pacific Journal of Mathematics, \textbf{249}(2), 489--505.

\bibitem{Wang2007}
Wang, B. (2007).
\textit{On the conditions to extend Ricci flow}.
International Mathematics Research Notices, \textbf{2008}(8), Art. ID rnm031.

\bibitem{Wang2020}
Wang, W., Chen, G., Liu, S., \& Zhou, F. (2020).
\textit{Some general gradient estimates for two nonlinear parabolic equations along Ricci flow}.
Journal of Mathematical Inequalities, \textbf{14}(1), 1--16.

\bibitem{Ye2005}
Ye, R. (2005).
\textit{Curvature estimates for the Ricci flow I}.
Calculus of Variations and Partial Differential Equations, \textbf{31}(4), 417--437.

\bibitem{Zhu2013}
Zhu, M. (2013).
\textit{Davies type estimate and the heat kernel bound under the Ricci flow}.
Advances in Mathematics, \textbf{244}, 368--392.
\bibitem{ASH2021}
Apurba Saha, Shahroud Azami, and Shyamal Kumar Hui, \textit{Evolution and monotonicity of geometric constants along the extended Ricci flow}, \textit{Mediterranean Journal of Mathematics}, Volume 18 (2021), pages 1--14.
\end{thebibliography}
\end{document}